\newtheorem{theorem}{Theorem}
\newtheorem{definition}[theorem]{Definition}
\newtheorem{example}[theorem]{Example}
\newtheorem{remark}[theorem]{Remark}
\newenvironment{proof}[1][Proof]{\noindent\textbf{#1.} }{\ \rule{0.5em}{0.5em}}
\numberwithin{theorem}{section}
\numberwithin{equation}{section}
\begin{document}

\title{Multi-Time KCC-Invariants}
\author{Mircea Neagu \\
{\small 1 August 2009; Revised 7 August 2009 (minor corrections)}}
\date{}
\maketitle

\begin{abstract}
In this paper we construct some multi-time geometrical extensions of the
KCC-invariants, which characterize a given second-order system of PDEs on
the 1-jet space $J^{1}(T,M)$. A theorem of characterization of these
multi-time geometrical KCC-invariants is given.
\end{abstract}

\textbf{Mathematics Subject Classification (2000):} 58A20, 58B20, 35M99.

\textbf{Key words and phrases:} multi-time 1-jet spaces, temporal and
spatial semisprays, nonlinear connections, PDE systems of second order,
multi-time KCC-invariants.

\section{Geometrical objects on multi-time 1-jet spaces}

\hspace{5mm}We remind first few differential geometrical properties of the
multi-time 1-jet spaces. The multi-time 1-jet bundle%
\begin{equation*}
\xi _{1}=(J^{1}(T,M),\pi _{1},T\times M)
\end{equation*}%
is a vector bundle over the product manifold $T\times M$, having the fibre
type $\mathbb{R}^{mn}$, where $m$ (resp. $n$) is the dimension of the 
\textit{temporal} (resp. \textit{spatial}) manifold $T$ (resp. $M$). If the
temporal manifold $T$ has the local coordinates $(t^{\alpha })_{\alpha =%
\overline{1,m}}$ and the spatial manifold $M$ has the local coordinates $%
(x^{i})_{i=\overline{1,n}}$, then we denote the local coordinates of the
multi-time 1-jet space $J^{1}(T,M)$ by $(t^{\alpha },x^{i},x_{\alpha }^{i})$%
. These transform by the rules [8]%
\begin{equation}
\left\{ 
\begin{array}{l}
\widetilde{t}^{\alpha }=\widetilde{t}^{\alpha }(t^{\beta })\medskip \\ 
\widetilde{x}^{i}=\widetilde{x}^{i}(x^{j})\medskip \\ 
\widetilde{x}_{\alpha }^{i}={{\dfrac{\partial \widetilde{x}^{i}}{\partial
x^{j}}}{\dfrac{\partial t^{\beta }}{\partial \widetilde{t}^{\alpha }}}%
x_{\beta }^{j},}%
\end{array}%
\right.  \label{rgg}
\end{equation}%
where $\det (\partial \widetilde{t}^{\alpha }/\partial t^{\beta })\neq 0$
and $\det (\partial \widetilde{x}^{i}/\partial x^{j})\neq 0$.

\begin{remark}
In this work the greek indices $\alpha ,\beta ,\gamma ,\delta ,\varepsilon
,\mu ,\nu ,\rho ...$ run over the set $\left\{ 1,2,...,m\right\} $ and the
latin indices $i,j,k,l,p,q,r,s...$ run over the set $\left\{
1,2,...,n\right\} $. The Einstein convention of summation is also adopted
all over this paper.
\end{remark}

In the geometrical study of the multi-time 1-jet vector bundle, a central r%
\^{o}le is played by the \textit{distinguished tensors} ($d-$tensors).

\begin{definition}
A geometrical object $D=\left( D_{\gamma k(\beta )(l)\ldots }^{\alpha
i(j)(\nu )\ldots }\right) $ on the 1-jet vector bundle $J^{1}(T,M),$ whose
local components transform by the rules%
\begin{equation}
D_{\gamma k(\beta )(l)\ldots }^{\alpha i(j)(\nu )\ldots }=\widetilde{D}%
_{\varepsilon r(\mu )(s)\ldots }^{\delta p(q)(\eta )\ldots }{\frac{\partial
t^{\alpha }}{\partial \widetilde{t}^{\delta }}}{\frac{\partial x^{i}}{%
\partial \widetilde{x}^{p}}}\left( {\frac{\partial x^{j}}{\partial 
\widetilde{x}^{q}}}{\frac{\partial \widetilde{t}^{\mu }}{\partial t^{\beta }}%
}\right) {\frac{\partial \widetilde{t}^{\varepsilon }}{\partial t^{\gamma }}}%
{\frac{\partial \widetilde{x}^{r}}{\partial x^{k}}}\left( {\frac{\partial 
\widetilde{x}^{s}}{\partial x^{l}}}{\frac{\partial t^{\nu }}{\partial 
\widetilde{t}^{\eta }}}\right) \ldots \;,  \label{tr-rules-$d-$tensors}
\end{equation}%
is called a \textit{$d-$}\textbf{tensor field}.
\end{definition}

\begin{remark}
The use of parentheses for certain indices of the local components $%
D_{\gamma k(\beta )(l)\ldots }^{\alpha i(j)(\nu )\ldots }$ of the
distinguished tensor field $D$ on the 1-jet space is motivated by the fact
that the pair of indices $"$ $_{(\beta )}^{(j)}$ $"$ or $"$ $_{(l)}^{(\nu )}$
$"$ behaves like a single index.
\end{remark}

\begin{example}
\label{Liouville} The geometrical object%
\begin{equation*}
\mathbf{C}=\mathbf{C}_{(\alpha )}^{(i)}\dfrac{\partial }{\partial x_{\alpha
}^{i}},
\end{equation*}%
where $\mathbf{C}_{(\alpha )}^{(i)}=x_{\alpha }^{i},$ represents a $d-$%
tensor field on the 1-jet space; this is called the \textbf{canonical
Liouville }$d-$\textbf{tensor field} of the 1-jet vector bundle $J^{1}(T,M)$
and it is a global geometrical object.
\end{example}

\begin{example}
\label{normal}Let $h=(h_{\alpha \beta }(t))$ be a Riemannian metric on the
temporal ma\-ni\-fold $T$. The geometrical object 
\begin{equation*}
\mathbf{J}_{h}=J_{(\alpha )\beta j}^{(i)}\dfrac{\partial }{\partial
x_{\alpha }^{i}}\otimes dt^{\beta }\otimes dx^{j},
\end{equation*}%
where $J_{(\alpha )\beta j}^{(i)}=h_{\alpha \beta }\delta _{j}^{i}$ is a $d-$%
tensor field on $J^{1}(T,M),$ which is called the $h$\textbf{-normalization }%
$d-$\textbf{tensor field} of the 1-jet space $J^{1}(T,M)$. Obviously, it is
also a global geometrical object.
\end{example}

In the Riemann-Lagrange differential geometry of the 1-jet spaces developed
in [8], [9] important r\^{o}les are also played by geometrical objects as
the \textit{temporal} or \textit{spatial semisprays}, together with the 
\textit{multi-time jet nonlinear connections}.

\begin{definition}
A set of local functions $H=\left( H_{(\alpha )\beta }^{(i)}\right) $ on $%
J^{1}(T,M),$ which transform by the rules%
\begin{equation}
2\widetilde{H}_{(\alpha )\beta }^{(i)}=2H_{(\gamma )\nu }^{(k)}{\frac{%
\partial \widetilde{x}^{i}}{\partial x^{k}}\frac{\partial t^{\gamma }}{%
\partial \widetilde{t}^{\alpha }}\frac{\partial t^{\nu }}{\partial 
\widetilde{t}^{\beta }}}-{\frac{\partial t^{\mu }}{\partial \widetilde{t}%
^{\beta }}}{\frac{\partial \widetilde{x}_{\alpha }^{i}}{\partial t^{\mu }}},
\label{tr-rules-t-s}
\end{equation}%
is called a \textbf{temporal semispray} on $J^{1}(T,M)$.
\end{definition}

\begin{example}
\label{H0} Let us consider a Riemannian metric $h=(h_{\alpha \beta }(t))$ on
the temporal manifold $T$ and let%
\begin{equation*}
H_{\beta \gamma }^{\alpha }=\frac{h^{\alpha \mu }}{2}\left( \frac{\partial
h_{\beta \mu }}{\partial t^{\gamma }}+\frac{\partial h_{\gamma \mu }}{%
\partial t^{\beta }}-\frac{\partial h_{\beta \gamma }}{\partial t^{\mu }}%
\right)
\end{equation*}%
be its Christoffel symbols. Taking into account that we have the
transformation rules%
\begin{equation}
\widetilde{H}_{\nu \rho }^{\delta }=H_{\beta \gamma }^{\alpha }\frac{%
\partial \widetilde{t}^{\delta }}{\partial t^{\alpha }}\frac{\partial
t^{\beta }}{\partial \widetilde{t}^{\nu }}\frac{\partial t^{\gamma }}{%
\partial \widetilde{t}^{\rho }}+\frac{\partial \widetilde{t}^{\delta }}{%
\partial t^{\varepsilon }}\frac{\partial ^{2}t^{\varepsilon }}{\partial 
\widetilde{t}^{\nu }\partial \widetilde{t}^{\rho }},  \label{t-Cris-symb}
\end{equation}%
we deduce that the local components%
\begin{equation*}
\mathring{H}_{(\alpha )\beta }^{(i)}=-\frac{1}{2}H_{\alpha \beta }^{\mu
}x_{\mu }^{i}
\end{equation*}%
define a temporal semispray $\mathring{H}=\left( \mathring{H}_{(\alpha
)\beta }^{(i)}\right) $ on $J^{1}(T,M)$. This is called the \textbf{%
canonical temporal semispray associated to the temporal metric }$h_{\alpha
\beta }(t)$.
\end{example}

\begin{definition}
A set of local functions $G=\left( G_{(\alpha )\beta }^{(i)}\right) ,$ which
transform by the rules%
\begin{equation}
2\widetilde{G}_{(\alpha )\beta }^{(i)}=2G_{(\gamma )\nu }^{(k)}{\frac{%
\partial \widetilde{x}^{i}}{\partial x^{k}}}{\frac{\partial t^{\gamma }}{%
\partial \widetilde{t}^{\alpha }}}\frac{\partial t^{\nu }}{\partial 
\widetilde{t}^{\beta }}-{\frac{\partial x^{r}}{\partial \widetilde{x}^{s}}}{%
\frac{\partial \widetilde{x}_{\alpha }^{i}}{\partial x^{r}}}\widetilde{x}%
_{\beta }^{s},  \label{tr-rules-s-s}
\end{equation}%
is called a \textbf{spatial semispray} on $J^{1}(T,M)$.
\end{definition}

\begin{example}
\label{G0} Let $\varphi =(\varphi _{ij}(x))$ be a Riemannian metric on the
spatial manifold $M$ and let us consider%
\begin{equation*}
\gamma _{jk}^{i}=\frac{\varphi ^{im}}{2}\left( \frac{\partial \varphi _{jm}}{%
\partial x^{k}}+\frac{\partial \varphi _{km}}{\partial x^{j}}-\frac{\partial
\varphi _{jk}}{\partial x^{m}}\right)
\end{equation*}%
its Christoffel symbols. Taking into account that we have the transformation
rules%
\begin{equation}
\widetilde{\gamma }_{qr}^{p}=\gamma _{jk}^{i}\frac{\partial \widetilde{x}^{p}%
}{\partial x^{i}}\frac{\partial x^{j}}{\partial \widetilde{x}^{q}}\frac{%
\partial x^{k}}{\partial \widetilde{x}^{r}}+\frac{\partial \widetilde{x}^{p}%
}{\partial x^{l}}\frac{\partial ^{2}x^{l}}{\partial \widetilde{x}%
^{q}\partial \widetilde{x}^{r}},  \label{s-Cris-symb}
\end{equation}%
we deduce that the local components%
\begin{equation*}
\mathring{G}_{(\alpha )\beta }^{(i)}=\frac{1}{2}\gamma _{pq}^{i}x_{\alpha
}^{p}x_{\beta }^{q}
\end{equation*}%
define a spatial semispray $\mathring{G}=\left( \mathring{G}_{(\alpha )\beta
}^{(i)}\right) $ on $J^{1}(T,M)$. This is called the \textbf{canonical
spatial semispray associated to the spatial metric }$\varphi _{ij}(x)$.
\end{example}

\begin{definition}
A set of local functions $\Gamma =\left( M_{(\alpha )\beta
}^{(i)},N_{(\alpha )j}^{(i)}\right) $ on $J^{1}(T,M),$ which transform by
the rules%
\begin{equation}
\widetilde{M}{_{(\alpha )\beta }^{(i)}=M_{(\gamma )\nu }^{(k)}{\dfrac{%
\partial \widetilde{x}^{i}}{\partial x^{k}}}{\dfrac{\partial t^{\gamma }}{%
\partial \widetilde{t}^{\alpha }}}\frac{\partial t^{\nu }}{\partial 
\widetilde{t}^{\beta }}-\frac{\partial t^{\mu }}{\partial \widetilde{t}%
^{\beta }}{\dfrac{\partial \widetilde{x}_{\alpha }^{i}}{\partial t^{\mu }}}}
\label{tr-rules-t-nlc}
\end{equation}%
and%
\begin{equation}
\widetilde{N}{_{(\alpha )j}^{(i)}=N_{(\gamma )l}^{(k)}{\dfrac{\partial 
\widetilde{x}^{i}}{\partial x^{k}}}{\dfrac{\partial t^{\gamma }}{\partial 
\widetilde{t}^{\alpha }}}\frac{\partial x^{l}}{\partial \widetilde{x}^{j}}-%
\frac{\partial x^{r}}{\partial \widetilde{x}^{j}}{\dfrac{\partial \widetilde{%
x}_{\alpha }^{i}}{\partial x^{r}}}},  \label{tr-rules-s-nlc}
\end{equation}%
is called a \textbf{nonlinear connection} on the 1-jet space $J^{1}(T,M)$.
\end{definition}

\begin{example}
Let us consider that $(T,h_{\alpha \beta }(t))$ and $(M,\varphi _{ij}(x))$
are Rie\-ma\-nni\-an manifolds having the Christoffel symbols $H_{\beta
\gamma }^{\alpha }(t)$ and $\gamma _{jk}^{i}(x)$. Then, using the
transformation rules (\ref{rgg}), (\ref{t-Cris-symb}) and (\ref{s-Cris-symb}%
), we deduce that the set of local functions%
\begin{equation*}
\mathring{\Gamma}=\left( \mathring{M}_{(\alpha )\beta }^{(i)},\mathring{N}%
_{(\alpha )j}^{(i)}\right) ,
\end{equation*}%
where%
\begin{equation*}
\mathring{M}_{(\alpha )\beta }^{(i)}=-H_{\alpha \beta }^{\mu }x_{\mu }^{i}%
\text{ \ \ and \ \ }\mathring{N}_{(\alpha )j}^{(i)}=\gamma
_{jr}^{i}x_{\alpha }^{r},
\end{equation*}%
represents a nonlinear connection on the 1-jet space $J^{1}(T,M)$. This
multi-time jet nonlinear connection is called the \textbf{canonical
nonlinear connection attached to the pair of Riemannian metrics }$(h_{\alpha
\beta }(t),\varphi _{ij}(x))$.
\end{example}

In the sequel, let us study the geometrical relations between \textit{%
temporal }or\textit{\ spatial semisprays} and \textit{multi-time nonlinear
connections} on the 1-jet vector bundle $J^{1}(T,M)$. In this direction,
using the local transformation laws (\ref{tr-rules-t-s}), (\ref%
{tr-rules-t-nlc}) and (\ref{rgg}), respectively the transformation laws (\ref%
{tr-rules-s-s}), (\ref{tr-rules-s-nlc}) and (\ref{rgg}), by direct local
computations, we find the following geometrical results:

\begin{theorem}
The \textit{temporal semisprays} $H=\left( H_{(\alpha )\beta }^{(i)}\right) $
and the sets of \textit{temporal components of nonlinear connections }$%
\Gamma _{\text{temporal}}=\left( M_{(\alpha )\beta }^{(i)}\right) $ are in
one-to-one correspondence on the 1-jet space $J^{1}(T,M),$ via: 
\begin{equation*}
M_{(\alpha )\beta }^{(i)}=2H_{(\alpha )\beta }^{(i)},\qquad {H_{(\alpha
)\beta }^{(i)}={\frac{1}{2}}M_{(\alpha )\beta }^{(i)}};
\end{equation*}
\end{theorem}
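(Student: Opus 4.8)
The plan is to verify three things: that the assignment $H \mapsto 2H$ sends temporal semisprays to temporal components of nonlinear connections, that the assignment $M \mapsto \tfrac{1}{2}M$ sends temporal components of nonlinear connections to temporal semisprays, and that these two assignments are mutually inverse. Since each of these reduces to a direct comparison of the two transformation laws (\ref{tr-rules-t-s}) and (\ref{tr-rules-t-nlc}), the argument is essentially an observation rather than a computation, and the local character of both defining conditions means it suffices to argue componentwise in an arbitrary pair of overlapping charts.

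First I would place the two transformation laws side by side. The defining rule for a temporal semispray reads
\begin{equation*}
2\widetilde{H}_{(\alpha )\beta }^{(i)}=2H_{(\gamma )\nu }^{(k)}\,{\frac{\partial \widetilde{x}^{i}}{\partial x^{k}}}{\frac{\partial t^{\gamma }}{\partial \widetilde{t}^{\alpha }}}{\frac{\partial t^{\nu }}{\partial \widetilde{t}^{\beta }}}-{\frac{\partial t^{\mu }}{\partial \widetilde{t}^{\beta }}}{\frac{\partial \widetilde{x}_{\alpha }^{i}}{\partial t^{\mu }}},
\end{equation*}
while the defining rule for the temporal part of a nonlinear connection reads
\begin{equation*}
\widetilde{M}_{(\alpha )\beta }^{(i)}=M_{(\gamma )\nu }^{(k)}\,{\frac{\partial \widetilde{x}^{i}}{\partial x^{k}}}{\frac{\partial t^{\gamma }}{\partial \widetilde{t}^{\alpha }}}{\frac{\partial t^{\nu }}{\partial \widetilde{t}^{\beta }}}-{\frac{\partial t^{\mu }}{\partial \widetilde{t}^{\beta }}}{\frac{\partial \widetilde{x}_{\alpha }^{i}}{\partial t^{\mu }}}.
\end{equation*}
The crucial observation is that the inhomogeneous (non-tensorial) term is literally identical in both laws, while the homogeneous term in the semispray law carries an extra factor $2$ relative to the connection law, and this same factor $2$ multiplies $\widetilde{H}$ on the left-hand side.

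The key step is then immediate. If $H=(H_{(\alpha )\beta }^{(i)})$ is a temporal semispray and I set $M_{(\alpha )\beta }^{(i)}:=2H_{(\alpha )\beta }^{(i)}$, then substituting $\widetilde{M}=2\widetilde{H}$ and $M=2H$ into the semispray law turns it exactly into the connection law, so $M$ is a legitimate temporal component of a nonlinear connection. Conversely, if $M$ transforms by the connection law and I set $H_{(\alpha )\beta }^{(i)}:=\tfrac{1}{2}M_{(\alpha )\beta }^{(i)}$, then the connection law, read after the substitution $M=2H$ as $2\widetilde{H}=2H\cdot(\ldots)-(\ldots)$, is precisely the semispray law. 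Since $M\mapsto \tfrac{1}{2}M$ and $H\mapsto 2H$ are plainly inverse to one another pointwise, the correspondence is a bijection, which establishes the claimed one-to-one correspondence.

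The only point requiring any care — and hence the main, albeit mild, obstacle — is confirming that the two inhomogeneous terms coincide symbol for symbol, since any discrepancy there would obstruct the substitution; once this is checked against (\ref{tr-rules-t-s}) and (\ref{tr-rules-t-nlc}), no further work remains.
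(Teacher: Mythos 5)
Your proposal is correct and follows exactly the route the paper intends: the paper offers no written-out proof, merely asserting the result ``by direct local computations'' from the transformation laws (\ref{tr-rules-t-s}) and (\ref{tr-rules-t-nlc}), and your side-by-side comparison --- identical inhomogeneous terms, matching factor of $2$ on both the left-hand side and the homogeneous term --- is precisely that computation made explicit. Nothing is missing.
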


\begin{theorem}
\textbf{(i)} If $G_{(\alpha )\beta }^{(i)}$ are the components of a spatial
semispray on $J^{1}(T,M),$ where $(T,h_{\alpha \beta }(t))$ is a Riemannian
manifold$,$ then the components%
\begin{equation*}
N_{(\alpha )j}^{(i)}={\frac{\partial G^{i}}{\partial x_{\mu }^{j}}}h_{\alpha
\mu },
\end{equation*}%
where $G^{i}=h^{\delta \varepsilon }G_{(\delta )\varepsilon }^{(i)},$
represent a spatial nonlinear connection on $J^{1}(T,M)$.

\textbf{(ii)} Conversely$,$ the spatial nonlinear connection $\Gamma _{\text{%
spatial}}=\left( N_{(\alpha )j}^{(i)}\right) $ produces the spatial
semispray components%
\begin{equation*}
G_{(\alpha )\beta }^{(i)}=\frac{1}{2}N_{(\alpha )r}^{(i)}x_{\beta }^{r}.
\end{equation*}
\end{theorem}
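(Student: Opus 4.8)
The plan is to prove both parts by direct verification of the prescribed transformation laws, reducing everything to the coordinate rules (\ref{rgg}), the spatial-semispray rule (\ref{tr-rules-s-s}) and the spatial nonlinear-connection rule (\ref{tr-rules-s-nlc}). For part (i) I would first establish the law obeyed by the contracted object $G^{i}=h^{\delta \varepsilon }G_{(\delta )\varepsilon }^{(i)}$. Multiplying (\ref{tr-rules-s-s}) by $\widetilde{h}^{\alpha \beta }$ and using that the inverse temporal metric is a contravariant $2$-tensor, so that $\widetilde{h}^{\alpha \beta }(\partial t^{\gamma }/\partial \widetilde{t}^{\alpha })(\partial t^{\nu }/\partial \widetilde{t}^{\beta })=h^{\gamma \nu }$, together with $(\partial x^{r}/\partial \widetilde{x}^{s})(\partial \widetilde{x}^{s}/\partial x^{p})=\delta _{p}^{r}$ and $\partial \widetilde{x}_{\alpha }^{i}/\partial x^{r}=(\partial ^{2}\widetilde{x}^{i}/\partial x^{j}\partial x^{r})(\partial t^{\mu }/\partial \widetilde{t}^{\alpha })x_{\mu }^{j}$, I expect to obtain
\begin{equation*}
\widetilde{G}^{i}=G^{k}\frac{\partial \widetilde{x}^{i}}{\partial x^{k}}-\frac{1}{2}h^{\mu \nu }\frac{\partial ^{2}\widetilde{x}^{i}}{\partial x^{j}\partial x^{p}}x_{\mu }^{j}x_{\nu }^{p}.
\end{equation*}

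Next I would differentiate this relation with respect to $\widetilde{x}_{\sigma }^{l}$. The key observation is that fixing the new base coordinates $(\widetilde{t},\widetilde{x})$ amounts to fixing $(t,x)$, since $\widetilde{t}=\widetilde{t}(t)$ and $\widetilde{x}=\widetilde{x}(x)$ are diffeomorphisms; hence only the fibre coordinates vary and the chain rule reads $\partial /\partial \widetilde{x}_{\sigma }^{l}=(\partial x^{j}/\partial \widetilde{x}^{l})(\partial \widetilde{t}^{\sigma }/\partial t^{\mu })\,\partial /\partial x_{\mu }^{j}$. Applying this and then contracting with $\widetilde{h}_{\alpha \sigma }$, using $\widetilde{h}_{\alpha \sigma }(\partial \widetilde{t}^{\sigma }/\partial t^{\mu })=h_{\beta \mu }(\partial t^{\beta }/\partial \widetilde{t}^{\alpha })$ and $h^{\mu \nu }h_{\beta \mu }=\delta _{\beta }^{\nu }$, I expect the homogeneous part to collapse into $N_{(\beta )j}^{(k)}(\partial \widetilde{x}^{i}/\partial x^{k})(\partial t^{\beta }/\partial \widetilde{t}^{\alpha })(\partial x^{j}/\partial \widetilde{x}^{l})$ and the remaining part to reproduce $-(\partial x^{r}/\partial \widetilde{x}^{l})(\partial \widetilde{x}_{\alpha }^{i}/\partial x^{r})$, which is precisely (\ref{tr-rules-s-nlc}).

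Part (ii) is purely algebraic. I would substitute the rule (\ref{tr-rules-s-nlc}) for $\widetilde{N}_{(\alpha )r}^{(i)}$ into $\widetilde{G}_{(\alpha )\beta }^{(i)}=\tfrac{1}{2}\widetilde{N}_{(\alpha )r}^{(i)}\widetilde{x}_{\beta }^{r}$, use $(\partial x^{l}/\partial \widetilde{x}^{r})\widetilde{x}_{\beta }^{r}=(\partial t^{\nu }/\partial \widetilde{t}^{\beta })x_{\nu }^{l}$, and recognize $\tfrac{1}{2}N_{(\gamma )l}^{(k)}x_{\nu }^{l}=G_{(\gamma )\nu }^{(k)}$; after relabelling the dummy indices in the inhomogeneous term this yields (\ref{tr-rules-s-s}), so the $G_{(\alpha )\beta }^{(i)}$ are indeed spatial-semispray components.

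The main obstacle is the inhomogeneous second-order term in part (i): one must carry it accurately through both the fibre differentiation and the contraction with the temporal metric, and then use the symmetry of the Hessian $\partial ^{2}\widetilde{x}^{i}/\partial x^{j}\partial x^{p}$ together with the symmetry of $h^{\mu \nu }$ to merge the two contributions produced by the product rule into a single one carrying the factor $1$ rather than $\tfrac{1}{2}$. Everything else is routine bookkeeping of the transformation factors $\partial t/\partial \widetilde{t}$, $\partial x/\partial \widetilde{x}$ and their inverses.
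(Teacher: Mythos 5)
Your proposal is correct and is exactly the ``direct local computation'' the paper invokes (the paper states this theorem with no written proof, merely citing the transformation laws (\ref{tr-rules-s-s}), (\ref{tr-rules-s-nlc}) and (\ref{rgg})). The intermediate law for $\widetilde{G}^{i}$, the fibre chain rule $\partial /\partial \widetilde{x}_{\sigma }^{l}=(\partial x^{j}/\partial \widetilde{x}^{l})(\partial \widetilde{t}^{\sigma }/\partial t^{\mu })\,\partial /\partial x_{\mu }^{j}$, and the merging of the two Hessian terms that removes the factor $\tfrac{1}{2}$ all check out, as does the algebra in part (ii).
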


\section{Multi-time geometrical KCC-theory}

\hspace{5mm}In this Section we construct some multi-time generalizations on
the 1-jet space $J^{1}(T,M)$ for the basic objects of the KCC-theory ([1],
[2], [3], [10]). In this respect, let us consider on $J^{1}(T,M)$ a
second-order system of partial differential equations of local form%
\begin{equation}
\frac{\partial ^{2}x^{i}}{\partial t^{\alpha }\partial t^{\beta }}%
+F_{(\alpha )\beta }^{(i)}(t^{\gamma },x^{k},x_{\gamma }^{k})=0,\quad \alpha
,\beta =\overline{1,m},\quad i=\overline{1,n},  \label{SODE}
\end{equation}%
where $x_{\gamma }^{k}=\partial x^{k}/\partial t^{\gamma }$, $F_{(\alpha
)\beta }^{(i)}=F_{(\beta )\alpha }^{(i)}$ and the local components $%
F_{(\alpha )\beta }^{(i)}(t^{\gamma },x^{k},x_{\gamma }^{k})$ transform
under a change of coordinates (\ref{rgg}) by the rules%
\begin{equation}
\widetilde{F}_{(\alpha )\beta }^{(i)}=2F_{(\gamma )\nu }^{(k)}{\frac{%
\partial \widetilde{x}^{i}}{\partial x^{k}}\frac{\partial t^{\gamma }}{%
\partial \widetilde{t}^{\alpha }}\frac{\partial t^{\nu }}{\partial 
\widetilde{t}^{\beta }}}-{\frac{\partial t^{\mu }}{\partial \widetilde{t}%
^{\beta }}}{\frac{\partial \widetilde{x}_{\alpha }^{i}}{\partial t^{\mu }}}-{%
\frac{\partial x^{r}}{\partial \widetilde{x}^{s}}}{\frac{\partial \widetilde{%
x}_{\alpha }^{i}}{\partial x^{r}}}\widetilde{x}_{\beta }^{s},.
\label{transformations-F}
\end{equation}

\begin{remark}
The second-order system of partial differential equations (\ref{SODE}) is
invariant under a change of coordinates (\ref{rgg}).
\end{remark}

\begin{example}
Let us consider that $(T,h_{\alpha \beta }(t))$ and $(M,\varphi _{ij}(x))$
are Rie\-ma\-nni\-an manifolds having the Christoffel symbols $H_{\beta
\gamma }^{\alpha }(t)$ and $\gamma _{jk}^{i}(x)$. Then, the local components%
\begin{equation*}
\mathring{F}_{(\alpha )\beta }^{(i)}=-H_{\alpha \beta }^{\mu }x_{\mu
}^{i}+\gamma _{pq}^{i}x_{\alpha }^{p}x_{\beta }^{q}
\end{equation*}%
transform under a change of coordinates (\ref{rgg}) by the rules (\ref%
{transformations-F}). In this particular case, the PDE system (\ref{SODE})
becomes%
\begin{equation}
\frac{\partial ^{2}x^{i}}{\partial t^{\alpha }\partial t^{\beta }}-H_{\alpha
\beta }^{\mu }x_{\mu }^{i}+\gamma _{pq}^{i}x_{\alpha }^{p}x_{\beta
}^{q}=0,\quad \alpha ,\beta =\overline{1,m},\quad i=\overline{1,n},
\label{affine-eq}
\end{equation}%
that is the PDE system of the affine maps between the Riemannian manifolds $%
(T,h_{\alpha \beta }(t))$ and $(M,\varphi _{ij}(x))$. We recall that these
affine maps carry the geodesics of the temporal Riemannian manifold $%
(T,h_{\alpha \beta }(t))$ into the geodesics of the spatial Riemannian
manifold $(M,\varphi _{ij}(x))$. Moreover, the $h-$trace of the equations (%
\ref{affine-eq}) produces the equations of the harmonic maps between the
Riemannian manifolds $(T,h_{\alpha \beta }(t))$ and $(M,\varphi _{ij}(x))$.
For more details, please see [6].
\end{example}

Using a temporal Riemannian metric $h_{\alpha \beta }(t)$ on $T$ and taking
into account the transformation rules (\ref{tr-rules-t-s}), (\ref%
{tr-rules-s-s}) and (\ref{transformations-F}), we can rewrite the PDE system
(\ref{SODE}) in the following form:%
\begin{equation*}
\frac{\partial ^{2}x^{i}}{\partial t^{\alpha }\partial t^{\beta }}-H_{\alpha
\beta }^{\mu }x_{\mu }^{i}+2G_{(\alpha )\beta }^{(i)}(t^{\gamma
},x^{k},x_{\gamma }^{k})=0,\quad \alpha ,\beta =\overline{1,m},\quad i=%
\overline{1,n},
\end{equation*}%
where%
\begin{equation*}
G_{(\alpha )\beta }^{(i)}=\frac{1}{2}F_{(\alpha )\beta }^{(i)}+\frac{1}{2}%
H_{\alpha \beta }^{\mu }x_{\mu }^{i}
\end{equation*}%
are the components of a spatial semispray on $J^{1}(T,M)$. The coefficients
of the spatial semispray $G_{(\alpha )\beta }^{(i)}$ produce the spatial
components $N_{(\alpha )j}^{(i)}$ of a nonlinear connection $\Gamma $ on the
1-jet space $J^{1}(T,M)$, by putting%
\begin{equation*}
N_{(\alpha )j}^{(i)}=\frac{h^{\mu \nu }\partial G_{(\mu )\nu }^{(i)}}{%
\partial x_{\gamma }^{j}}h_{\gamma \alpha }=\frac{h^{\mu \nu }}{2}\frac{%
\partial F_{(\mu )\nu }^{(i)}}{\partial x_{\gamma }^{j}}h_{\gamma \alpha }+%
\frac{h^{\mu \nu }}{2}H_{\mu \nu }^{\gamma }h_{\gamma \alpha }\delta
_{j}^{i}.
\end{equation*}

In order to find the basic jet multi-time geometrical invariants of the PDE
system (\ref{SODE}) (please see Kosambi [7], Cartan [4] and Chern [5]) under
the coordinate transformations (\ref{rgg}), we define the $h-$\textit{%
KCC-covariant derivative of a $d-$tensor of type }$T_{(\alpha
)}^{(i)}(t^{\gamma },x^{k},x_{\gamma }^{k})$ on the 1-jet space $J^{1}(T,M)$%
, via%
\begin{eqnarray*}
\frac{\overset{h}{\nabla }T_{(\alpha )}^{(i)}}{\partial t^{\beta }} &=&\frac{%
\partial T_{(\alpha )}^{(i)}}{\partial t^{\beta }}+N_{(\alpha
)r}^{(i)}T_{(\beta )}^{(r)}-H_{\alpha \beta }^{\mu }T_{(\mu )}^{(i)}= \\
&=&\frac{\partial T_{(\alpha )}^{(i)}}{\partial t^{\beta }}+\frac{h^{\mu \nu
}}{2}\frac{\partial F_{(\mu )\nu }^{(i)}}{\partial x_{\gamma }^{r}}h_{\gamma
\alpha }T_{(\beta )}^{(r)}+\frac{h^{\mu \nu }}{2}H_{\mu \nu }^{\gamma
}h_{\gamma \alpha }T_{(\beta )}^{(i)}-H_{\alpha \beta }^{\mu }T_{(\mu
)}^{(i)}.
\end{eqnarray*}

\begin{remark}
The $h-$\textit{KCC-covariant derivative} components $\dfrac{\overset{h}{%
\nabla }T_{(\alpha )}^{(i)}}{\partial t^{\beta }}$ transform under a change
of coordinates (\ref{rgg}) as a $d-$tensor of type $\mathcal{T}_{(\alpha
)\beta }^{(i)}.$
\end{remark}

In such a geometrical context, if we use the notation $x_{\alpha
}^{i}=\partial x^{i}/\partial t^{\alpha }$, then the PDE system (\ref{SODE})
can be rewritten in the following distinguished tensorial form:%
\begin{eqnarray*}
\frac{\overset{h}{\nabla }x_{\alpha }^{i}}{\partial t^{\beta }}
&=&-F_{(\alpha )\beta }^{(i)}(t^{\gamma },x^{k},x_{\gamma }^{k})+N_{(\alpha
)r}^{(i)}x_{\beta }^{r}-H_{\alpha \beta }^{\mu }x_{\mu }^{i}= \\
&=&-F_{(\alpha )\beta }^{(i)}+\frac{h^{\mu \nu }}{2}\frac{\partial F_{(\mu
)\nu }^{(i)}}{\partial x_{\gamma }^{r}}h_{\gamma \alpha }x_{\beta }^{r}+%
\frac{h^{\mu \nu }}{2}H_{\mu \nu }^{\gamma }h_{\gamma \alpha }x_{\beta
}^{i}-H_{\alpha \beta }^{\mu }x_{\mu }^{i}.
\end{eqnarray*}

\begin{definition}
The distinguished tensor%
\begin{equation*}
\overset{h}{\varepsilon }\text{ }\!\!_{(\alpha )\beta }^{(i)}=-F_{(\alpha
)\beta }^{(i)}+\frac{h^{\mu \nu }}{2}\frac{\partial F_{(\mu )\nu }^{(i)}}{%
\partial x_{\gamma }^{r}}h_{\gamma \alpha }x_{\beta }^{r}+\frac{h^{\mu \nu }%
}{2}H_{\mu \nu }^{\gamma }h_{\gamma \alpha }x_{\beta }^{i}-H_{\alpha \beta
}^{\mu }x_{\mu }^{i}
\end{equation*}%
is called the \textbf{first multi-time }$h-$\textbf{KCC-invariant} on the
1-jet space $J^{1}(T,M)$ of the PDEs (\ref{SODE}), which can be interpreted
as an \textbf{external force} [1], [3].
\end{definition}

\begin{example}
For the second-order PDE system (\ref{affine-eq})$,$ which gives the affine
maps between the Riemannian manifolds $(T,h_{\alpha \beta }(t))$ and $%
(M,\varphi _{ij}(x)),$ the first multi-time $h-$KCC-invariant is zero.
\end{example}

\begin{example}
It can be easily seen that for the particular first order PDE system%
\begin{equation}
\frac{\partial x^{i}}{\partial t^{\alpha }}=X_{(\alpha )}^{(i)}(t^{\gamma
},x^{k})\Rightarrow \frac{\partial ^{2}x^{i}}{\partial t^{\alpha }\partial
t^{\beta }}=\frac{\partial X_{(\alpha )}^{(i)}}{\partial t^{\beta }}+\frac{%
\partial X_{(\alpha )}^{(i)}}{\partial x^{r}}x_{\beta }^{r},  \label{Jet_DS}
\end{equation}%
where $X_{(\alpha )}^{(i)}(t,x)$ is a given $d-$tensor on $J^{1}(T,M),$ the
first multi-time $h-$KCC-invariant has the form%
\begin{equation*}
\overset{h}{\varepsilon }\text{ }\!\!_{(\alpha )\beta }^{(i)}=\frac{\partial
X_{(\alpha )}^{(i)}}{\partial t^{\beta }}+\frac{1}{2}\frac{\partial
X_{(\alpha )}^{(i)}}{\partial x^{r}}x_{\beta }^{r}+\frac{h^{\mu \nu }}{2}%
H_{\mu \nu }^{\gamma }h_{\gamma \alpha }x_{\beta }^{i}-H_{\alpha \beta
}^{\mu }x_{\mu }^{i}.
\end{equation*}
\end{example}

In the sequel, let us vary the solutions $x^{i}(t^{\gamma })$ of the PDE
system (\ref{SODE}) by the nearby smooth maps $(\overline{x}^{i}(t^{\gamma
},s))_{s\in (-\varepsilon ,\varepsilon )},$ where $\overline{x}%
^{i}(t^{\gamma },0)=x^{i}(t^{\gamma }).$ Then, if we consider the \textit{%
variation $d-$tensor field}%
\begin{equation*}
\xi ^{i}(t^{\gamma })=\left. \dfrac{\partial \overline{x}^{i}}{\partial s}%
\right\vert _{s=0},
\end{equation*}%
we get the \textit{variational equations}%
\begin{equation}
\frac{\partial ^{2}\xi ^{i}}{\partial t^{\alpha }\partial t^{\beta }}+\frac{%
\partial F_{(\alpha )\beta }^{(i)}}{\partial x^{k}}\xi ^{k}+\frac{\partial
F_{(\alpha )\beta }^{(i)}}{\partial x_{\mu }^{r}}\frac{\partial \xi ^{r}}{%
\partial t^{\mu }}=0,\quad \alpha ,\beta =\overline{1,m},\quad i=\overline{%
1,n}.  \label{Var-Equations}
\end{equation}%
It is obvious that the equations (\ref{Var-Equations}) imply the $h-$\textit{%
trace variational equations}%
\begin{equation}
h^{\alpha \beta }\frac{\partial ^{2}\xi ^{i}}{\partial t^{\alpha }\partial
t^{\beta }}+\frac{\partial F^{i}}{\partial x^{k}}\xi ^{k}+\frac{\partial
F^{i}}{\partial x_{\mu }^{r}}\frac{\partial \xi ^{r}}{\partial t^{\mu }}%
=0,\quad i=\overline{1,n},  \label{h-Trace-Var-Eq}
\end{equation}%
where $F^{i}=h^{\alpha \beta }F_{(\alpha )\beta }^{(i)}$.

To find other multi-time geometrical invariants for the PDE system (\ref%
{SODE}), we also introduce the $h-$\textit{KCC-covariant derivative of a $d-$%
tensor of type }$\xi ^{i}(t^{\gamma })$ on the 1-jet space $J^{1}(T,M)$, via 
\begin{equation*}
\frac{\overset{h}{\nabla }\xi ^{i}}{\partial t^{\alpha }}=\frac{\partial \xi
^{i}}{\partial t^{\alpha }}+N_{(\alpha )r}^{(i)}\xi ^{r}=\frac{\partial \xi
^{i}}{\partial t^{\alpha }}+\frac{1}{2}\frac{\partial F^{i}}{\partial
x_{\gamma }^{r}}h_{\gamma \alpha }\xi ^{r}+\frac{1}{2}H^{\gamma }h_{\gamma
\alpha }\xi ^{i},
\end{equation*}%
where $H^{\gamma }=h^{\mu \nu }H_{\mu \nu }^{\gamma }.$

\begin{remark}
The $h-$\textit{KCC-covariant derivative} components $\dfrac{\overset{h}{%
\nabla }\xi ^{i}}{\partial t^{\alpha }}$ transform under a change of
coordinates (\ref{rgg}) as a $d-$tensor of type $\mathrm{T}_{(\alpha
)}^{(i)}.$
\end{remark}

In this geometrical context, the $h-$trace variational equations (\ref%
{h-Trace-Var-Eq}) can be rewritten in the following distinguished tensorial
form:%
\begin{equation*}
h^{\alpha \beta }\frac{\overset{h}{\nabla }}{\partial t^{\beta }}\left[ 
\frac{\overset{h}{\nabla }\xi ^{i}}{\partial t^{\alpha }}\right] =\overset{h}%
{P}\text{ \negthinspace \negthinspace }_{r}^{i}\xi ^{r},
\end{equation*}%
where%
\begin{equation*}
\begin{array}{lll}
\overset{h}{P}\text{ \negthinspace \negthinspace }_{j}^{i} & = & -\dfrac{%
\partial F^{i}}{\partial x^{j}}+\dfrac{1}{2}\dfrac{\partial ^{2}F^{i}}{%
\partial t^{\gamma }\partial x_{\gamma }^{j}}+\dfrac{1}{2}\dfrac{\partial
^{2}F^{i}}{\partial x^{r}\partial x_{\gamma }^{j}}x_{\gamma }^{r}-\dfrac{1}{2%
}\dfrac{\partial ^{2}F^{i}}{\partial x_{\mu }^{j}\partial x_{\gamma }^{r}}%
F_{(\gamma )\mu }^{(r)}+\medskip \\ 
&  & +\dfrac{h_{\gamma \mu }}{4}\dfrac{\partial F^{i}}{\partial x_{\gamma
}^{r}}\dfrac{\partial F^{r}}{\partial x_{\mu }^{j}}+\dfrac{h^{\gamma \eta }}{%
2}\dfrac{\partial h\mu \gamma }{\partial t^{\eta }}\dfrac{\partial F^{i}}{%
\partial x_{\mu }^{j}}+\medskip \\ 
&  & +\left[ \dfrac{1}{2}\dfrac{\partial H^{\gamma }}{\partial t^{\gamma }}+%
\dfrac{h^{\gamma \eta }}{2}\dfrac{\partial h\mu \gamma }{\partial t^{\eta }}%
H^{\mu }-\dfrac{h_{\gamma \mu }}{4}H^{\gamma }H^{\mu }\right] \delta
_{j}^{i}.%
\end{array}%
\end{equation*}

\begin{definition}
The $d-$tensor $\overset{h}{P}$ \negthinspace $_{j}^{i}$ is called the 
\textbf{second multi-time }$h-$\textbf{KCC-invariant} on the 1-jet space $%
J^{1}(T,M)$ of the PDE system (\ref{SODE}), or the \textbf{multi-time }$h-$%
\textbf{deviation curvature }$d-$\textbf{tensor}.
\end{definition}

\begin{example}
If we consider the second-order PDE system of the affine maps between the
Riemannian manifolds $(T,h_{\alpha \beta }(t))$ and $(M,\varphi _{ij}(x)),$
system which is given by (\ref{affine-eq})$,$ then the second multi-time $h-$%
KCC-invariant has the form%
\begin{equation*}
\overset{h}{P}\text{ \negthinspace \negthinspace }_{j}^{i}=-h^{\alpha \beta }%
\mathfrak{R}_{pqj}^{i}x_{\alpha }^{p}x_{\beta }^{q},
\end{equation*}%
where%
\begin{equation*}
\mathfrak{R}_{pqj}^{i}=\frac{\partial \gamma _{pq}^{i}}{\partial x^{j}}-%
\frac{\partial \gamma _{pj}^{i}}{\partial x^{q}}+\gamma _{pq}^{r}\gamma
_{rj}^{i}-\gamma _{pj}^{r}\gamma _{rq}^{i}
\end{equation*}%
are the components of the curvature of the spatial Riemannian metric $%
\varphi _{ij}(x).$ Consequently, the $h-$trace variational equations (\ref%
{h-Trace-Var-Eq}) become the following \textbf{mul\-ti\--time }$h-$\textbf{%
Jacobi field equations:}%
\begin{equation*}
h^{\alpha \beta }\left\{ \frac{\overset{h}{\nabla }}{\partial t^{\beta }}%
\left[ \frac{\overset{h}{\nabla }\xi ^{i}}{\partial t^{\alpha }}\right] +%
\mathfrak{R}_{pqr}^{i}x_{\alpha }^{p}x_{\beta }^{q}\xi ^{r}\right\} =0,
\end{equation*}%
where%
\begin{equation*}
\frac{\overset{h}{\nabla }\xi ^{i}}{\partial t^{\alpha }}=\frac{\partial \xi
^{i}}{\partial t^{\alpha }}+\gamma _{pr}^{i}x_{\alpha }^{p}\xi ^{r}.
\end{equation*}
\end{example}

\begin{example}
For the particular first order PDE system (\ref{Jet_DS}) the multi-time $h-$%
deviation curvature $d-$tensor is given by%
\begin{equation*}
\begin{array}{lll}
\overset{h}{P}\text{ \negthinspace \negthinspace }_{j}^{i} & = & \dfrac{%
h^{\alpha \beta }}{2}\left[ \dfrac{\partial ^{2}X_{(\alpha )}^{(i)}}{%
\partial t^{\beta }\partial x^{j}}+\dfrac{\partial ^{2}X_{(\alpha )}^{(i)}}{%
\partial x^{j}\partial x^{r}}x_{\beta }^{r}+\dfrac{1}{2}\dfrac{\partial
X_{(\alpha )}^{(i)}}{\partial x^{r}}\dfrac{\partial X_{(\beta )}^{(r)}}{%
\partial x^{j}}\right] +\medskip \\ 
&  & +\left[ \dfrac{1}{2}\dfrac{\partial H^{\gamma }}{\partial t^{\gamma }}+%
\dfrac{h^{\gamma \eta }}{2}\dfrac{\partial h\mu \gamma }{\partial t^{\eta }}%
H^{\mu }-\dfrac{h_{\gamma \mu }}{4}H^{\gamma }H^{\mu }\right] \delta
_{j}^{i}.%
\end{array}%
\end{equation*}
\end{example}

\begin{definition}
The distinguished tensors%
\begin{equation*}
\overset{h}{R}\text{ \negthinspace \negthinspace }_{jk}^{i\alpha }=\frac{1}{3%
}\left[ \frac{\partial \overset{h}{P}\text{ \negthinspace \negthinspace }%
_{j}^{i}}{\partial x_{\alpha }^{k}}-\frac{\partial \overset{h}{P}\text{
\negthinspace \negthinspace }_{k}^{i}}{\partial x_{\alpha }^{j}}\right]
,\qquad \overset{h}{B}\text{ \negthinspace \negthinspace }_{jk(l)}^{i\alpha
(\beta )}=\frac{\partial \overset{h}{R}\text{ \negthinspace \negthinspace }%
_{jk}^{i\alpha }}{\partial x_{\beta }^{l}}
\end{equation*}%
and\ 
\begin{equation*}
D_{(\alpha )\beta (j)(k)(l)}^{(i)\text{ \ }(\gamma )(\varepsilon )(\mu )}=%
\frac{\partial ^{3}F_{(\alpha )\beta }^{(i)}}{\partial x_{\gamma
}^{j}\partial x_{\varepsilon }^{k}\partial x_{\mu }^{l}}
\end{equation*}%
are called the \textbf{third}, \textbf{fourth}\textit{\ }and\textit{\ }%
\textbf{fifth multi-time }$h-$\textbf{KCC-invariant}\textit{\ }on the 1-jet
vector bundle $J^{1}(T,M)$ of the PDE system (\ref{SODE}).
\end{definition}

\begin{remark}
Taking into account the transformation rules (\ref{transformations-F}) of
the components $F_{(\alpha )\beta }^{(i)}$, we immediately deduce that the
components $D_{(\alpha )\beta (j)(k)(l)}^{(i)\text{ \ }(\gamma )(\varepsilon
)(\mu )}$ behave like a $d-$tensor on the 1-jet space $J^{1}(T,M)$.
\end{remark}

\begin{example}
For the second-order PDE system (\ref{affine-eq}) of the affine maps between
the Riemannian manifolds $(T,h_{\alpha \beta }(t))$ and $(M,\varphi
_{ij}(x)),$ the third, fourth and fifth multi-time $h-$KCC-invariants are
given by%
\begin{equation*}
\overset{h}{R}\text{ \negthinspace \negthinspace }_{jk}^{i\alpha }=h^{\alpha
\mu }\mathfrak{R}_{pjk}^{i}x_{\mu }^{p},\qquad \overset{h}{B}\text{
\negthinspace \negthinspace }_{jk(l)}^{i\alpha (\beta )}=h^{\alpha \beta }%
\mathfrak{R}_{ljk}^{i},\qquad D_{(\alpha )\beta (j)(k)(l)}^{(i)\text{ \ }%
(\gamma )(\varepsilon )(\mu )}=0.
\end{equation*}
\end{example}

\begin{example}
For the first order PDE system (\ref{Jet_DS}) the third, fourth and fifth
multi-time $h-$KCC-invariants are zero.
\end{example}

\begin{theorem}[of characterization of the multi-time KCC-invariants]
Let $(T,h)$ be a Riemannian manifold$,$ where $m=\dim T\geq 3$. If the first
and the fifth KCC-invariants of the PDE system (\ref{SODE}) cancel on $%
J^{1}(T,M),$ then there exist on $J^{1}(T,M)$ some local functions $\Gamma
_{pq}^{i}(t,x),$ where $i,p,q=\overline{1,n},$ $n=\dim M,$ and $\mathtt{S}%
_{\alpha pq}^{i\nu }(t,x),$ $\alpha \neq \nu \in \{1,2,...,m\},$\ $i=%
\overline{1,n},$ $p\neq q\in \{1,2,...,n\},$ which have the properties 
\begin{equation*}
\Gamma _{pq}^{i}=\Gamma _{qp}^{i},\qquad \mathtt{S}_{\alpha pq}^{i\nu }+%
\mathtt{S}_{\alpha qp}^{i\nu }=0
\end{equation*}%
and (no sum by $\alpha $ or $\nu $) 
\begin{equation}
2\mathtt{S}_{\alpha pq}^{i\nu }=\sum_{\QATOP{\varepsilon {=1}}{\varepsilon {%
\neq \nu }}}^{m}\left[ h^{\varepsilon \varepsilon }\mathtt{S}_{\varepsilon
pq}^{i\nu }-h^{\nu \nu }\mathtt{S}_{\nu pq}^{i\varepsilon }\right]
h_{\varepsilon \alpha },  \label{**}
\end{equation}%
such that (no sum by $\alpha $ or $\beta $) 
\begin{equation}
F_{(\alpha )\beta }^{(i)}=\Gamma _{pq}^{i}x_{\alpha }^{p}x_{\beta
}^{q}-H_{\alpha \beta }^{\mu }x_{\mu }^{i}+2\delta _{\alpha \beta }\sum_{%
\QATOP{{\nu =1}}{{\nu \neq \alpha }}}^{m}\sum_{p\neq q\in \{1,2,...,n\}}%
\mathtt{S}_{\alpha pq}^{i\nu }x_{\alpha }^{p}x_{\nu }^{q},  \label{h-KCC=0}
\end{equation}%
where $\delta _{\alpha \beta }$ is the Kronecker symbol and $H_{\alpha \beta
}^{\gamma }$ are the Christoffel symbols of the Riemannian metric $h_{\alpha
\beta }(t)$.
\end{theorem}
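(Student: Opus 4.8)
The plan is to unpack the two vanishing conditions and extract from them the algebraic form of $F_{(\alpha)\beta}^{(i)}$ by integrating the differential constraints. First I would exploit the hypothesis that the fifth KCC-invariant vanishes, namely
\[
\frac{\partial^{3}F_{(\alpha)\beta}^{(i)}}{\partial x_{\gamma}^{j}\,\partial x_{\varepsilon}^{k}\,\partial x_{\mu}^{l}}=0 .
\]
Because all third-order derivatives with respect to the jet variables $x_{\gamma}^{j}$ are zero, each component $F_{(\alpha)\beta}^{(i)}$ must be a polynomial of degree at most two in the velocities $x_{\gamma}^{j}$, with coefficients that are functions of $(t,x)$ only. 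Thus I would write
\[
F_{(\alpha)\beta}^{(i)}
= A_{(\alpha)\beta(p)(q)}^{(i)(\gamma)(\varepsilon)}\,x_{\gamma}^{p}x_{\varepsilon}^{q}
+ B_{(\alpha)\beta(p)}^{(i)(\gamma)}\,x_{\gamma}^{p}
+ C_{(\alpha)\beta}^{(i)} ,
\]
where $A$, $B$, $C$ depend only on $(t,x)$ and $A$ is symmetric in the pair $(p,\gamma)\leftrightarrow(q,\varepsilon)$. This reduces the problem to determining these three families of coefficients.

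Next I would use the vanishing of the first KCC-invariant $\overset{h}{\varepsilon}{}_{(\alpha)\beta}^{(i)}=0$. Substituting the quadratic ansatz into the definition
\[
\overset{h}{\varepsilon}{}_{(\alpha)\beta}^{(i)}
= -F_{(\alpha)\beta}^{(i)}
+\frac{h^{\mu\nu}}{2}\frac{\partial F_{(\mu)\nu}^{(i)}}{\partial x_{\gamma}^{r}}h_{\gamma\alpha}x_{\beta}^{r}
+\frac{h^{\mu\nu}}{2}H_{\mu\nu}^{\gamma}h_{\gamma\alpha}x_{\beta}^{i}
-H_{\alpha\beta}^{\mu}x_{\mu}^{i}
\]
and collecting terms by their polynomial degree in the velocities, I would obtain a system of identities: the degree-zero part forces $C_{(\alpha)\beta}^{(i)}=0$; the degree-one part forces $B$ to be precisely the Christoffel contribution $-H_{\alpha\beta}^{\mu}$ acting on the appropriate velocity index, reproducing the $-H_{\alpha\beta}^{\mu}x_{\mu}^{i}$ term; and the degree-two part yields a linear algebraic relation among the $A$-coefficients. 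The key point is that the velocities $x_{\gamma}^{j}$ are independent variables, so each monomial coefficient must vanish separately, turning one tensorial identity into several purely algebraic ones.

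The main obstacle, and the heart of the theorem, lies in analyzing the degree-two identity. Here I expect the coefficient tensor $A$ to split into a part that is fully symmetric and diagonal in the temporal indices — which I would identify with $\Gamma_{pq}^{i}(t,x)$ giving the $\Gamma_{pq}^{i}x_{\alpha}^{p}x_{\beta}^{q}$ term — and an off-diagonal, temporally antisymmetric remainder, which I would name $\mathtt{S}_{\alpha pq}^{i\nu}$. The constraint $\overset{h}{\varepsilon}{}_{(\alpha)\beta}^{(i)}=0$ at quadratic order, once the $h$-trace contraction $h^{\mu\nu}$ is carried out, should produce exactly the linear relation \eqref{**} tying each $\mathtt{S}_{\alpha pq}^{i\nu}$ to a sum over the remaining temporal directions; this is where the hypothesis $m\geq 3$ becomes essential, since with fewer temporal dimensions the sum $\sum_{\varepsilon\neq\nu}$ would be too degenerate to pin down the coefficients. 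The symmetry $F_{(\alpha)\beta}^{(i)}=F_{(\beta)\alpha}^{(i)}$ and the antisymmetrization inherent in separating the $\delta_{\alpha\beta}$ (diagonal) from off-diagonal temporal pairs should deliver the stated symmetry properties $\Gamma_{pq}^{i}=\Gamma_{qp}^{i}$ and $\mathtt{S}_{\alpha pq}^{i\nu}+\mathtt{S}_{\alpha qp}^{i\nu}=0$. Reassembling the diagonal and off-diagonal pieces then yields the final closed form \eqref{h-KCC=0}. The delicate bookkeeping will be keeping the temporal index structure straight through the $h$-contractions, especially verifying that the off-diagonal terms organize themselves into the constrained form \eqref{**} rather than remaining free.
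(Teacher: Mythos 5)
Your outline follows essentially the same route as the paper's proof: integrate $\partial^{3}F/\partial x^{3}=0$ to get a velocity-quadratic ansatz with $(t,x)$-coefficients, substitute into $\overset{h}{\varepsilon}{}_{(\alpha)\beta}^{(i)}=0$, split by polynomial degree (the paper's relations for $\Gamma$, $\mathcal{U}$, $\mathcal{V}$, with an $h$-trace to resolve the linear coefficient), and then classify the quadratic coefficients by cases on the temporal indices, which is exactly where $m\geq 3$ forces the surviving diagonal pieces to collapse to a single symmetric $\Gamma_{pq}^{i}$ and leaves the antisymmetric $\mathtt{S}_{\alpha pq}^{i\nu}$ subject to \eqref{**}. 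The only slight imprecision is that $m\geq 3$ is really needed to identify the common $\Gamma_{pq}^{i}$ across temporal index pairs rather than to derive \eqref{**} itself, but this does not affect the correctness of the strategy.
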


\begin{proof}
By integration, the relations%
\begin{equation*}
D_{(\alpha )\beta (j)(k)(l)}^{(i)\text{ \ }(\gamma )(\varepsilon )(\sigma )}=%
\frac{\partial ^{3}F_{(\alpha )\beta }^{(i)}}{\partial x_{\gamma
}^{j}\partial x_{\varepsilon }^{k}\partial x_{\sigma }^{l}}=0,
\end{equation*}%
where $F_{(\alpha )\beta }^{(i)}=F_{(\beta )\alpha }^{(i)}$, subsequently
lead to 
\begin{eqnarray*}
\frac{\partial ^{2}F_{(\alpha )\beta }^{(i)}}{\partial x_{\gamma
}^{j}\partial x_{\varepsilon }^{k}} &=&2\Gamma _{(\alpha )\beta (j)(k)}^{(i)%
\text{ \ \ }(\gamma )(\varepsilon )}(t,x)\Rightarrow \\
&\Rightarrow &\frac{\partial F_{(\alpha )\beta }^{(i)}}{\partial x_{\gamma
}^{j}}=2\Gamma _{(\alpha )\beta (j)(q)}^{(i)\text{ \ \ }(\gamma )(\nu
)}x_{\nu }^{q}+\mathcal{U}_{(\alpha )\beta (j)}^{(i)\text{ \ \ }(\gamma
)}(t,x)\Rightarrow \\
&\Rightarrow &F_{(\alpha )\beta }^{(i)}=\Gamma _{(\alpha )\beta (p)(q)}^{(i)%
\text{ \ \ }(\mu )(\nu )}x_{\mu }^{p}x_{\nu }^{q}+\mathcal{U}_{(\alpha
)\beta (q)}^{(i)\text{ \ \ }(\nu )}x_{\nu }^{q}+\mathcal{V}_{(\alpha )\beta
}^{(i)}(t,x),
\end{eqnarray*}%
where%
\begin{equation}
\begin{array}{ll}
\Gamma _{(\alpha )\beta (j)(k)}^{(i)\text{ \ \ }(\gamma )(\varepsilon
)}=\Gamma _{(\beta )\alpha (j)(k)}^{(i)\text{ \ \ }(\gamma )(\varepsilon )},
& \Gamma _{(\alpha )\beta (j)(k)}^{(i)\text{ \ \ }(\gamma )(\varepsilon
)}=\Gamma _{(\alpha )\beta (k)(j)}^{(i)\text{ \ \ }(\varepsilon )(\gamma
)},\medskip \\ 
\mathcal{U}_{(\alpha )\beta (j)}^{(i)\text{ \ \ }(\gamma )}=\mathcal{U}%
_{(\beta )\alpha (j)}^{(i)\text{ \ \ }(\gamma )}, & \mathcal{V}_{(\alpha
)\beta }^{(i)}=\mathcal{V}_{(\beta )\alpha }^{(i)}.%
\end{array}
\label{symmetry-conditions}
\end{equation}

The equalities $\overset{h}{\varepsilon }$ $\!\!_{(\alpha )\beta }^{(i)}=0$
on $J^{1}(T,M)$ lead us to%
\begin{equation}
\begin{array}{lll}
\Gamma _{(\alpha )\beta (p)(q)}^{(i)\text{ \ \ }(\mu )(\nu )} & = & \dfrac{1%
}{2}\left[ \Gamma _{\text{ }(p)(q)}^{i(\eta )(\nu )}\delta _{\beta }^{\mu
}+\Gamma _{\text{ }(q)(p)}^{i(\eta )(\mu )}\delta _{\beta }^{\nu }\right]
h_{\eta \alpha },\medskip \\ 
\mathcal{U}_{(\alpha )\beta (q)}^{(i)\text{ \ \ }(\nu )} & = & \dfrac{1}{2}%
\mathcal{U}_{\text{ }(q)}^{i(\eta )}h_{\eta \alpha }\delta _{\beta }^{\nu }+%
\dfrac{1}{2}H^{\eta }h_{\eta \alpha }\delta _{\beta }^{\nu }\delta
_{q}^{i}-H_{\alpha \beta }^{\nu }\delta _{q}^{i},\medskip \\ 
\mathcal{V}_{(\alpha )\beta }^{(i)} & = & 0,%
\end{array}
\label{relations}
\end{equation}%
where%
\begin{equation*}
\Gamma _{\text{ }(p)(q)}^{i(\mu )(\nu )}=h^{\varepsilon \rho }\Gamma
_{(\varepsilon )\rho (p)(q)}^{(i)\text{ \ }(\mu )(\nu )}\text{ and }\mathcal{%
U}_{\text{ }(q)}^{i(\nu )}=h^{\varepsilon \rho }\mathcal{U}_{(\varepsilon
)\rho (q)}^{(i)\text{ \ }(\nu )}.
\end{equation*}%
Applying an $h-$trace in the second relation of (\ref{relations}), we deduce
that%
\begin{equation*}
\mathcal{U}_{(\alpha )\beta (q)}^{(i)\text{ \ \ }(\nu )}=-H_{\alpha \beta
}^{\nu }\delta _{q}^{i}.
\end{equation*}%
The first relation of (\ref{relations}) and the first symmetry properties of
(\ref{symmetry-conditions}) imply the following equalities:

\begin{enumerate}
\item for every $\alpha \neq \beta $ we have (no sum by $\alpha $ or $\beta $%
):

\begin{enumerate}
\item $\mu ,\nu \notin \{\alpha ,\beta \}\Rightarrow \Gamma _{(\alpha )\beta
(p)(q)}^{(i)\text{ \ \ }(\mu )(\nu )}=0;$

\item $\mu =\alpha ,$ $\nu =\alpha \Rightarrow \Gamma _{(\alpha )\beta
(p)(q)}^{(i)\text{ \ \ }(\alpha )(\alpha )}=0;$

\item $\mu =\alpha ,$ $\nu =\beta \Rightarrow \Gamma _{(\alpha )\beta
(p)(q)}^{(i)\text{ \ \ }(\alpha )(\beta )}=\dfrac{1}{2}\Gamma _{\text{ }%
(q)(p)}^{i(\eta )(\alpha )}h_{\eta \alpha }\overset{not}{=}\mathbb{S}%
_{\alpha pq}^{i}=\mathbb{T}_{\beta pq}^{i};$

\item $\mu =\beta ,$ $\nu =\alpha \Rightarrow \Gamma _{(\alpha )\beta
(p)(q)}^{(i)\text{ \ \ }(\beta )(\alpha )}=\dfrac{1}{2}\Gamma _{\text{ }%
(p)(q)}^{i(\eta )(\alpha )}h_{\eta \alpha }\overset{not}{=}\mathbb{T}%
_{\alpha pq}^{i}=\mathbb{S}_{\beta pq}^{i};$

\item $\mu =\beta ,$ $\nu =\beta \Rightarrow \Gamma _{(\alpha )\beta
(p)(q)}^{(i)\text{ \ \ }(\beta )(\beta )}=\Gamma _{(\beta )\alpha
(p)(q)}^{(i)\text{ \ \ }(\beta )(\beta )}=$

$=\dfrac{1}{2}\left[ \Gamma _{\text{ }(p)(q)}^{i(\eta )(\beta )}h_{\eta
\alpha }+\Gamma _{\text{ }(q)(p)}^{i(\eta )(\beta )}h_{\eta \alpha }\right] =%
\mathtt{S}_{\alpha pq}^{i\beta }+\mathtt{T}_{\alpha pq}^{i\beta }=0;$
\end{enumerate}

\item for every $\alpha =\beta \in \{1,2,...,m\}$ we obtain (no sum by $%
\alpha $):

\begin{enumerate}
\item $\mu \neq \alpha ,\nu \neq \alpha \Rightarrow \Gamma _{(\alpha )\alpha
(p)(q)}^{(i)\text{ \ \ }(\mu )(\nu )}=0;$

\item $\mu =\alpha ,\nu \neq \alpha \Rightarrow \Gamma _{(\alpha )\alpha
(p)(q)}^{(i)\text{ \ \ }(\alpha )(\nu )}=\dfrac{1}{2}\Gamma _{\text{ }%
(p)(q)}^{i(\eta )(\nu )}h_{\eta \alpha }\overset{not}{=}\mathtt{S}_{\alpha
pq}^{i\nu }=\mathtt{T}_{\alpha qp}^{i\nu };$

\item $\mu \neq \alpha ,\nu =\alpha \Rightarrow \Gamma _{(\alpha )\alpha
(p)(q)}^{(i)\text{ \ \ }(\mu )(\alpha )}=\dfrac{1}{2}\Gamma _{\text{ }%
(q)(p)}^{i(\eta )(\mu )}h_{\eta \alpha }\overset{not}{=}\mathtt{T}_{\alpha
pq}^{i\mu }=\mathtt{S}_{\alpha qp}^{i\mu };$

\item $\mu =\alpha ,$ $\nu =\alpha \Rightarrow \Gamma _{(\alpha )\alpha
(p)(q)}^{(i)\text{ \ \ }(\alpha )(\alpha )}=\Gamma _{(\alpha )\alpha
(q)(p)}^{(i)\text{ \ \ }(\alpha )(\alpha )}=$

$=\dfrac{1}{2}\left[ \Gamma _{\text{ }(p)(q)}^{i(\eta )(\alpha )}h_{\eta
\alpha }+\Gamma _{\text{ }(q)(p)}^{i(\eta )(\alpha )}h_{\eta \alpha }\right]
=\mathbb{T}_{\alpha pq}^{i}+\mathbb{S}_{\alpha pq}^{i};$
\end{enumerate}
\end{enumerate}

The first symmetry condition from (\ref{symmetry-conditions}), together with
1.(c) and 1.(d), give us ($m=\dim T\geq 3$)%
\begin{equation*}
\begin{array}{llllllllllllll}
\mathbb{S}_{1pq}^{i} & = & \mathbb{T}_{2pq}^{i} & = & \mathbb{T}_{3pq}^{i} & 
= & \mathbb{T}_{4pq}^{i} & = & \cdot & \cdot & = & \mathbb{T}_{mpq}^{i} & 
\overset{not}{=} & \dfrac{1}{2}\Gamma _{pq}^{i}\medskip \\ 
\mathbb{S}_{2pq}^{i} & = & \mathbb{T}_{1pq}^{i} & = & \mathbb{T}_{3pq}^{i} & 
= & \mathbb{T}_{4pq}^{i} & = & \cdot & \cdot & = & \mathbb{T}_{mpq}^{i} & 
\overset{not}{=} & \dfrac{1}{2}\Gamma _{pq}^{i}\medskip \\ 
\cdot & \cdot & \cdot & \cdot & \cdot & \cdot & \cdot & \cdot & \cdot & \cdot
& \cdot & \cdot & \cdot & \cdot \medskip \\ 
\cdot & \cdot & \cdot & \cdot & \cdot & \cdot & \cdot & \cdot & \cdot & \cdot
& \cdot & \cdot & \cdot & \cdot \medskip \\ 
\mathbb{S}_{mpq}^{i} & = & \mathbb{T}_{1pq}^{i} & = & \mathbb{T}_{2pq}^{i} & 
= & \mathbb{T}_{3pq}^{i} & = & \cdot & \cdot & = & \mathbb{T}_{(m-1)pq}^{i}
& \overset{not}{=} & \dfrac{1}{2}\Gamma _{pq}^{i}.%
\end{array}%
\end{equation*}%
Consequently, for every $\alpha \neq \beta \in \{1,2,...,m\}$ we have%
\begin{equation*}
\Gamma _{(\alpha )\beta (p)(q)}^{(i)\text{ \ \ }(\mu )(\nu )}=\dfrac{1}{2}%
\Gamma _{pq}^{i}\left[ \delta _{\alpha }^{\mu }\delta _{\beta }^{\nu
}+\delta _{\alpha }^{\nu }\delta _{\beta }^{\mu }\right]
\end{equation*}%
and for every $\alpha \in \{1,2,...,m\}$ we have%
\begin{equation*}
\Gamma _{(\alpha )\alpha (p)(q)}^{(i)\text{ \ \ }(\alpha )(\alpha )}=\Gamma
_{pq}^{i}=\Gamma _{(\alpha )\alpha (q)(p)}^{(i)\text{ \ \ }(\alpha )(\alpha
)}=\Gamma _{qp}^{i}.
\end{equation*}

Using now all the preceding properties, together with the equality 2.(b), we
find the equations (\ref{**}). Moreover, for every $\alpha \neq \nu \in
\{1,2,...,m\}$, it is obvious that we have%
\begin{equation*}
\mathtt{S}_{\alpha pq}^{i\nu }+\mathtt{S}_{\alpha qp}^{i\nu }=0\Rightarrow 
\mathtt{S}_{\alpha pp}^{i\nu }=0.
\end{equation*}

All the preceding situations can be briefly written in the general formula%
\begin{equation*}
\Gamma _{(\alpha )\beta (p)(q)}^{(i)\text{ \ \ }(\mu )(\nu )}=\dfrac{1}{2}%
\Gamma _{pq}^{i}\left[ \delta _{\alpha }^{\mu }\delta _{\beta }^{\nu
}+\delta _{\alpha }^{\nu }\delta _{\beta }^{\mu }\right] +\mathtt{S}_{\alpha
pq}^{i\nu }\delta _{\alpha \beta }\delta _{\alpha }^{\mu }\left[ 1-\delta
_{\alpha }^{\nu }\right] +\mathtt{S}_{\alpha qp}^{i\mu }\delta _{\alpha
\beta }\delta _{\alpha }^{\nu }\left[ 1-\delta _{\alpha }^{\mu }\right] .
\end{equation*}

In conclusion, we obtain the equalities (\ref{h-KCC=0}) on the 1-jet space $%
J^{1}(T,M)$.\medskip
\end{proof}

\textbf{Open problem.} If we fix the indices $i$ and $p\neq q$ in the set $%
\{1,2,...,n\}$, then we deduce that the system of equations (\ref{**}) is an
homogenous linear system of order $m(m-1)$. Consequently, it has at least
the zero solution. Because the coefficients of the system depend only by the
metric $h_{\alpha \beta }(t)$, there exist a temporal Riemannian metric $%
h_{\alpha \beta }(t)$ such that the system of equations (\ref{**}) to admit
only the banal solution?

{\small \setlength{\parskip}{0mm} }

{\small \noindent Authors' address: }

{\small \medskip \noindent Mircea NEAGU\newline
University Transilvania of Bra\c{s}ov, Faculty of Mathematics and
Informatics,\newline
Department of Algebra, Geometry and Differential Equations,\newline
B-dul Iuliu Maniu, Nr. 50, BV 500091, Bra\c{s}ov, Romania.\newline
E-mail: mircea.neagu@unitbv.ro\newline
Website: http://www.2collab.com/user:mirceaneagu }

\end{document}